\newcommand{\dataversione}{March 13, 2014}
\numberwithin{equation}{section}
\newtheoremstyle{mytheorem}
{}
{}
{\it}
{\parindent}
{\bf}
{.}
{ }
{\thmnumber{#2.~}\thmname{#1}\thmnote{~\rm#3}}
\newtheoremstyle{myremark}
{}
{}
{\rm}
{\parindent}
{\bf}
{.}
{ }
{\thmnumber{#2.~}\thmname{#1}\thmnote{~\rm#3}}
\newtheoremstyle{myparagraph}
{}
{}
{\rm}
{\parindent}
{\bf}
{}
{ }
{\thmnumber{#2.~}\thmname{#1}\thmnote{#3}}
\theoremstyle{mytheorem}
\newtheorem{theorem}[subsection]{Theorem}
\newtheorem{lemma}[subsection]{Lemma}
\newtheorem{claim}[subsection]{Claim}
\newtheorem{conone}[subsection]{First Conjecture}
\newtheorem{contwo}[subsection]{Second Conjecture}
\theoremstyle{myremark}
\newtheorem*{remark*}{Remark}
\theoremstyle{myparagraph}
\newtheorem*{parag*}{}
\def\@secnumfont{\sc}
\def\section{\@startsection{section}{1}%
\z@{1.5\linespacing\@plus .2\linespacing}{.7\linespacing}%
{\normalfont\sc\centering}}
\def\ps@headings{\ps@empty
 \def\@evenhead{%
  \setTrue{runhead}%
  \normalfont\footnotesize
  \rlap{\thepage}\hfil
  \def\thanks{\protect\thanks@warning}%
  \leftmark{}{}\hfil}%
 \def\@oddhead{%
  \setTrue{runhead}%
  \normalfont\footnotesize\hfil
  \def\thanks{\protect\thanks@warning}%
  \rightmark{}{}\hfil \llap{\thepage}}%
\let\@mkboth\markboth}
\renewenvironment{proof}[1][\proofname]{\par
  \pushQED{\qed}%
  \normalfont \topsep6\p@\@plus6\p@\relax
  \trivlist
  \itemindent\normalparindent
  \item[\hskip\labelsep
    \scshape
    #1\@addpunct{.}]\ignorespaces
}{%
  \popQED\endtrivlist\@endpefalse
}
\providecommand{\proofname}{Proof}
\newcommand{\R}{\mathbb{R}}
\newcommand{\dV}{d_V\kern-1pt}
\begin{document}

	%
\pagestyle{empty}
\pagestyle{myheadings}
\markboth%
{\underline{\centerline{\hfill\footnotesize%
\textsc{Andrea Marchese}%
\vphantom{,}\hfill}}}%
{\underline{\centerline{\hfill\footnotesize%
\textsc{Building dimension and Almgren's stratification}%
\vphantom{,}\hfill}}}

	%
\thispagestyle{empty}

~\vskip -1.1 cm

	%
\centerline{\footnotesize version: \dataversione%
\hfill
}

\vspace{1.7 cm}

	%
{\Large\sl\centering
On the building dimension of closed cones
\\
and Almgren's stratification principle.
\\
}

\vspace{.4 cm}

	%
\centerline{\sc Andrea Marchese}

\vspace{.8 cm}

{\rightskip 1 cm
\leftskip 1 cm
\parindent 0 pt
\footnotesize

	%
{\sc Abstract.}
In this paper we disprove a conjecture stated in \cite{W} on the equality of two notions of dimension for closed cones. Moreover, we answer in the negative to the following question, raised in the same paper. 
Given a compact family $\mathcal{C}$ of closed cones and a set $S$ such that every blow-up of $S$ at every point $x\in S$ is contained in some element of $\mathcal{C}$, is it true that the dimension of $S$ is smaller than or equal to the largest dimension of a vector space contained is some element of $\mathcal{C}$?
\par
\medskip\noindent
{\sc Keywords:} Building dimension, Stratification.
\par
\medskip\noindent
{\sc MSC (2010): 49Q05, 49Q20.}

\par
}

%
%
\section{Introduction}
In \cite{W}, White proves a generalization of Almgren's stratification principle (see \cite{Al}). To explain it, let us consider the simplified Euclidean setting.
A \emph{cone} in $\R^n$ is a set $C$ such that $\lambda x\in C$ for every $x\in C$ and every $\lambda\geq0$.
Given a set $S\subset\R^n$ the \emph{difference set} of $S$ is the set
$${\rm{diff}}(S):=\{x-y: x,y\in S\}.$$
The \emph{building dimension} bdim$(C)$ of a cone $C$ is the quantity
$${\rm{bdim}}(C):=\sup\{\dim_{\mathcal{H}}(S): S\subset\R^n, {\rm{diff}}(S)\subset C\}.$$
Here and throughout the paper, $\dim_{\mathcal{H}}$ denotes the Hausdorff dimension of a subset of $\R^n$,
with respect to the Euclidean distance. Note that the building dimension of a cone $C$ is always less than or
equal to $\dim_{\mathcal{H}}(C)$. Moreover, if $C$ is a vector subspace of $\R^n$, then the building dimension
and the Hausdorff dimension of $C$ are equal, because ${\rm{diff}}(C)=C$. In \cite{W} this notion is used to prove a stratification principle. In particular the following statement is proved (see Theorem 2.2 of \cite{W}). 
We will give the definition of blow-up of a set in \S \ref{bu}.

\begin{theorem}[(White)]\label{t1}
Let $\mathcal{C}$ be a compact family of closed cones such that at each point of a set $S$ each blow-up of $S$ is contained in some element of $\mathcal{C}$.
Then the largest possible Hausdorff dimension $d$ of the set $S$ is smaller than or equal to the largest building dimension of the cones in $\mathcal{C}$.
\end{theorem}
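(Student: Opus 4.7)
The plan is to argue by contradiction. Set $d^* := \sup_{C \in \mathcal{C}} {\rm{bdim}}(C)$ and suppose $\dim_{\mathcal{H}}(S) > d^*$. Fix $d$ with $d^* < d < \dim_{\mathcal{H}}(S)$. By standard Hausdorff measure arguments (extraction of a subset of finite positive $\mathcal{H}^d$ measure and a density-point lemma, or a Frostman measure), one obtains a point $x_0 \in S$ at which the upper $d$-dimensional density of $S$ is strictly positive, i.e.\ there is a sequence $r_k \downarrow 0$ with $r_k^{-d}\mathcal{H}^d_\infty(S \cap B(x_0,r_k))$ bounded away from zero.

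Along this sequence I would extract a good blow-up. By compactness of the space of closed subsets of $\R^n$ in the local Hausdorff topology, a subsequence of the rescalings $(S - x_0)/r_k$ converges to a blow-up $S_\infty$. The hypothesis gives, for each such limit, a witnessing cone in $\mathcal{C}$; compactness of $\mathcal{C}$ then allows passing to a further subsequence so that the witnessing cones converge to a single $C \in \mathcal{C}$ with $S_\infty \subseteq C$. Lower semicontinuity of the Hausdorff $d$-content under local Hausdorff convergence of closed sets transfers the density bound to $S_\infty$, giving $\dim_{\mathcal{H}}(S_\infty) \geq d$.

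The crucial step is to upgrade $S_\infty \subseteq C$ into ${\rm{diff}}(S_\infty) \subseteq C$, for then $d \leq \dim_{\mathcal{H}}(S_\infty) \leq {\rm{bdim}}(C) \leq d^*$, a contradiction. Observe that every $y \in S_\infty$ is of the form $y = \lim_k (s_k - x_0)/r_k$ with $s_k \in S$, so
\[
S_\infty - y \;=\; \lim_k\big((S - x_0)/r_k - (s_k - x_0)/r_k\big) \;=\; \lim_k (S - s_k)/r_k.
\]
That is, $S_\infty - y$ is itself the Hausdorff limit of rescalings of $S$ around the points $s_k \to x_0$; a diagonal/Kuratowski argument identifies such a limit with a blow-up of $S$ at $x_0$ proper. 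By the hypothesis, it lies in some cone $C_y \in \mathcal{C}$. I then plan to merge these $C_y$ into a single cone $C$ by selecting $S_\infty$ of ``self-similar'' type — so that the translates $S_\infty - y$ are themselves Hausdorff limits along the \emph{same} rescaling sequence used to produce $S_\infty$ — and invoking once more the compactness of $\mathcal{C}$.

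The main obstacle is precisely this last coincidence step. Without it one obtains only ${\rm{diff}}(S_\infty) \subseteq \bigcup_{C \in \mathcal{C}} C$, which is useless for bounding $\dim_{\mathcal{H}}(S_\infty)$ via the building dimension of a \emph{single} cone. Overcoming this seems to require a careful fixed-point / maximality selection of $S_\infty$, possibly built as an iterated blow-up, whose translates $S_\infty - y$ (for $y$ in a countable dense subset of $S_\infty$) are, up to subsequence, blow-ups witnessed by the same $C$. This self-improving blow-up construction is where I expect the bulk of the technical work to sit; once it is in hand, the contradiction closes as described above.
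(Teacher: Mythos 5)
First, a point of reference: the paper does not prove Theorem~\ref{t1} at all. It is imported verbatim from \cite{W} (Theorem 2.2 there) and used as a black box --- indeed \S\ref{sc} invokes it to deduce the failure of the First Conjecture from the counterexample to the Second. So there is no internal proof to compare yours with, and I can only assess the proposal on its own terms. Your opening moves are sound: fixing $d$ with $\sup_{C\in\mathcal{C}}{\rm{bdim}}(C)<d<\dim_{\mathcal{H}}(S)$, extracting a point $x_0$ of positive upper $d$-density, passing to a Hausdorff-convergent subsequence of rescalings, and transferring the content bound to the limit (the inequality you need, $\mathcal{H}^d_\infty(S_\infty)\geq\limsup_k\mathcal{H}^d_\infty(S_{x_0,r_k})$, is the \emph{upper} semicontinuity of Hausdorff content under Hausdorff convergence of compacta, but the substance is right). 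Everything after that, however, is either missing or incorrect.

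The step you yourself flag as ``where the bulk of the technical work sits'' --- forcing ${\rm{diff}}(S_\infty)$ into a \emph{single} cone of $\mathcal{C}$ --- is precisely the content of the theorem; with only $S_\infty\subseteq C$ in hand you can bound $\dim_{\mathcal{H}}(S_\infty)$ by $\dim_{\mathcal{H}}(C)$ but learn nothing about the building dimension, so the proposal is a plan rather than a proof. Worse, the intermediate claim you do assert is false: a limit of the form $\lim_k (S-s_k)/r_k$ with \emph{moving} centers $s_k\to x_0$ is not in general a blow-up of $S$ at $x_0$, nor at any other point, and no diagonal argument repairs this. (Take $S=\{0\}\cup\bigcup_n\{a_n,a_n+b_n\}$ with $b_n\ll a_n\to0$ rapidly: every genuine blow-up at every point of $S$ is at most a two-point configuration missing the pair structure, yet $\lim_n(S-a_n)/b_n$ contains $\{0,1\}$.) The hypothesis of the theorem only constrains rescalings about a fixed center, and moving-center limits form a strictly larger class; hence even the weak conclusion that $S_\infty-y$ lies in \emph{some} $C_y\in\mathcal{C}$ is unjustified, let alone in a common one. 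The ``self-similar blow-up'' whose translates are limits along the same scales is exactly what you cannot assume to exist. A correct argument (as in \cite{W}) does not pass through the difference set of a single limit set; it works quantitatively, using compactness of $\mathcal{C}$ and an Egorov-type uniformization to trap, on a large subset $T$ of $S$ itself, all small-scale differences $y-z$ with $y,z\in T$ inside conical neighbourhoods $\{v:\dist(v,C)\leq\varepsilon|v|\}$ of cones converging to a single element of $\mathcal{C}$. Either reproduce that argument or cite \cite{W}; as written, the proposal does not establish the theorem.
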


A natural question then becomes the estimate of the building dimension. In the same paper the following characterization in terms of linear subspaces is proved (see Theorem 2.1 and Proposition A.2 of \cite{W}).
\begin{theorem}[(White)]\label{t2}
The building dimension of a closed cone $C$ equals the supremum of the dimensions of the vector
subspaces contained in $C$, in the following two cases:
\begin{itemize}
 \item $C$ is convex;
 \item $C$ is a subset of $\R^2$.
\end{itemize}
\end{theorem}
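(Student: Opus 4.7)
The inequality $\mathrm{bdim}(C) \ge \sup\{\dim V : V \subset C \text{ vector subspace}\}$ is immediate, since for any subspace $V \subset C$ the set $S := V$ satisfies $\mathrm{diff}(S) = V \subset C$ and realises the dimension $\dim V$. The plan is to establish the opposite inequality, starting from the general observation that $\mathrm{diff}(S) = -\mathrm{diff}(S)$, so the hypothesis $\mathrm{diff}(S) \subset C$ in fact upgrades for free to $\mathrm{diff}(S) \subset C \cap (-C)$.

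In the convex case, convexity of the cone $C$ implies that $C$ is closed under addition; hence $C \cap (-C)$ is closed under addition and under negation, and is a vector subspace. Any subspace $V \subset C$ satisfies $V = -V \subset C \cap (-C)$, so this is the largest subspace $V_0$ contained in $C$. From $\mathrm{diff}(S) \subset V_0$ and any choice of base point $x_0 \in S$ I conclude $S \subset x_0 + V_0$, and hence $\dim_{\mathcal{H}}(S) \leq \dim V_0$, which is the desired bound.

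For $C \subset \R^2$: if $C = \R^2$ there is nothing to prove. Otherwise $C \cap (-C)$ is a closed proper symmetric cone in $\R^2$, so its complement in $S^1$ is open, nonempty, and symmetric, and contains an open angular neighbourhood of some pair $\pm v$ of half-width $\delta > 0$. For every $x \neq y$ in $S$ the unit vector $(x-y)/\|x-y\|$ lies in $C \cap (-C) \cap S^1$, hence at angular distance at least $\delta$ from $\pm v$; decomposing in the orthonormal basis $(v, v^\perp)$ this translates into the quantitative bound
\[ |(x-y) \cdot v| \leq \cot\delta\, |(x-y) \cdot v^\perp|. \]
In particular $\pi_{v^\perp}$ is injective on $S$ and exhibits $S$ as the graph of a $\cot\delta$-Lipschitz function over a subset of $\R$, so $\dim_{\mathcal{H}}(S) \leq 1$. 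If $C$ contains a line this matches the subspace bound of $1$; if it does not, then $C \cap (-C) = \{0\}$, which forces $S$ to be a singleton and $\mathrm{bdim}(C) = 0$. The only planar-specific step is turning the angular gap into Lipschitz control, which is elementary trigonometry in $\R^2$ but has no obvious analogue in higher dimensions; this is consistent with the fact that the paper's main result disproves the corresponding equality in $\R^n$ for $n \ge 3$.
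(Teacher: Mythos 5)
Your proof is correct and complete in both cases; the paper itself offers no proof of this statement, since it is quoted directly from White (Theorem 2.1 and Proposition A.2 of \cite{W}), so there is nothing internal to compare against. Your route --- exploiting the symmetry $\mathrm{diff}(S)=-\mathrm{diff}(S)$ to reduce to $C\cap(-C)$, which is the maximal linear subspace in the convex case, and turning the angular gap in $S^1$ into a Lipschitz-graph bound in the planar case --- is essentially the standard argument White gives.
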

Moreover, the author conjectures the following.

\begin{conone}
 The conclusion of Theorem \ref{t2} holds for any closed cone $C$.
\end{conone}

The validity of this conjecture would improve the conclusion of Theorem \ref{t1}, proving that $d$ is exactly the largest dimension of a 
linear subspace that is a subset of one of the cones in $\mathcal{C}$. In fact, White conjectures also the following.

\begin{contwo}
If the hypothesis of Theorem \ref{t1} is satisfied, then the largest possible Hausdorff dimension $d$ of the set $S$ equals the largest dimension of a 
linear subspace that is a subset of one of the cones in $\mathcal{C}$.
\end{contwo}

In this paper, we disprove both conjectures. In \S \ref{cex}, we exhibit a closed cone $C\subset\R^3$ which does not contain planes, but whose building dimension is strictly larger than 1.
In \S\ref{sc}, we exhibit a set $S\subset \R^3$ with Hausdorff dimension strictly larger than 1 and with the property that there exists a closed cone which does not contain planes, but which
contains every blow-up of $S$ at every point $x\in S$. Note that the counterexample given in \S \ref{sc}, together with Theorem \ref{t1}, gives an indirect proof that the First Conjecture is not valid.
We give also the direct proof contained in \S \ref{cex}, because we find the construction of some interest. To our knowledge is not known whether or not Theorem \ref{t1} gives the best possible
estimate for the value of the dimension $d$.

%
%
\section*{Acknowledgements}
The author warmly thanks M. Focardi and E. Spadaro for having pointed out the problem and for valuable suggestions.
%
%

\section{Construction of the counterexample to the First Conjecture}\label{cex}
\subsection{Strategy of the proof}
We firstly construct a set $E\subset[-1,1]$ which is a closed, self-similar fractal in the sense of Hutchinson (see \cite{Fa}, \S 8.3), with $\dim_{\mathcal{H}}(E)>\frac{1}{3}$. In our case it is easy to compute the Hausdorff dimension of the Cartesian product
$S:=E\times E\times E\subset\R^3$ and we have $\dim_{\mathcal{H}}(S)=3\dim_{\mathcal{H}}(E)>1$. Then we construct a closed set $F\subset [-2,2]$ satisfying diff$(E)\subset F$ and we exhibit a line $\ell\subset\R^2$ which intersects the cone in $\R^2$ over $F\times F$
only at the origin. This is sufficient to prove that the cone $C$ in $\R^3$ over $F\times F\times F$ does not contain any plane $\pi$. In fact, if such a plane $\pi$ exists, then its projection on at least one of the coordinate planes should be the full coordinate
plane, which is not the case because of the existence of the line $\ell$. On the other hand, $C$ necessarily contains diff$(S)$, therefore $C$ disproves the conjecture.\\

\subsection{Construction of $E$}
Let $A\subset[0,1]$ be the self-similar set obtained as a countable intersection of the sets $A_i$ defined as follows:
\begin{itemize}
 \item $A_0=[0,1]$
 \item for every $i=1,2,\ldots$, $A_i$ is the union of $2^i$ closed intervals $I_{i,0},\ldots,I_{i,2^i-1}$, where $I_{i,2j}$ (respectively, $I_{i,2j+1}$) has length $\frac{1}{8-\varepsilon}$ times the length of
$I_{i-1,j}$ ($\varepsilon>0$ is to be fixed later) and it has the same left (respectively, right) extreme as $I_{i-1,j}$.
\end{itemize}

The set $E$ is the union of $A$ and $-A$. By Theorem 8.6 of \cite{Fa} it is easy to compute $$\dim_{\mathcal{H}}(E)=\dim_{\mathcal{H}}(A)=\frac{\ln 2}{\ln (8-\varepsilon)}>\frac{1}{3},$$ 
because $A$ is the invariant set associated to 2 similitudes, each with ratio $\frac{1}{8-\varepsilon}$. Analogously, denoting $S:=E\times E\times E$, we can compute 
$$\dim_{\mathcal{H}}(S)=\dim_{\mathcal{H}}(A\times A\times A)=\frac{\ln 8}{\ln (8-\varepsilon)}>1,$$ because $A\times A\times A$ is the invariant set associated to 8 similitudes, each with ratio $\frac{1}{8-\varepsilon}$.\\

\subsection{Construction of $F$}
For $i=1,2,\ldots$, we denote $F_i:=$diff$(A_i\cup-A_i)$ and $F=\bigcap_i F_i$. It follows immediately that diff$(E)\subset F$. Indeed in this case there holds diff$(E)=F$, but the inclusion is sufficient to our aim. Moreover $F$ is closed, because each $F_i$ is so, being $F_i$ the image of the Cartesian product $(A_i\cup-A_i)^2$ under a continuous map (the sum of the coordinates). Note that the difference set of a Borel set is not necessarily a Borel set (see \cite{ES}).

\subsection{Final contradiction}
The crucial part of the proof is the following claim.
\begin{claim}
There exists a line $\ell$ on $\R^2$ which intersects the cone in $\R^2$ over $F\times F$
only at the origin. 
\end{claim}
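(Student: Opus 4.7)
The plan is to exhibit a positive slope $m$ with $mF\cap F=\{0\}$, for then the line $\ell=\{y=mx\}$ meets the cone $\{\lambda(x,y):\lambda\geq 0,\ (x,y)\in F\times F\}$ only at the origin: any nonzero intersection point would correspond to some $u\in F\setminus\{0\}$ with $mu\in F$. Two structural features of $F$ drive the argument. Writing $r=1/(8-\varepsilon)$, the decomposition $F=(A-A)\cup(A+A)\cup-(A+A)$ together with the standard self-similar expansions of $A\pm A$ places $F$ inside the five pairwise disjoint bands
\[
B_1=[-2,-2+2r],\ B_2=[-1-r,-1+2r],\ B_3=[-2r,2r],\ B_4=[1-2r,1+r],\ B_5=[2-2r,2]
\]
(disjoint whenever $r<1/4$), and yields the exact central self-similar identity $F\cap B_3=rF$.

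I will choose $m$ in the open interval $I:=\bigl(2/(1-2r),\ (1-2r)/(2r)\bigr)$, which is nonempty precisely when $4r<(1-2r)^2$, i.e.\ when $r<1-\sqrt{3}/2$; I arrange this by taking $\varepsilon$ sufficiently small. The lower bound on $m$ forces each of $mB_1,mB_2,mB_4,mB_5$ to lie strictly outside $[-2,2]$, while the upper bound forces $mB_3\subset(-(1-2r),\,1-2r)$. Since every $B_j$ sits inside $[-2,2]$ and every noncentral $B_j$ is at distance at least $1-2r$ from the origin, one concludes $B_i\cap mB_j=\emptyset$ whenever $(i,j)\neq(3,3)$.

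Decomposing both $F$ and $mF$ along the bands kills every cross term, leaving
\[
F\cap mF=(F\cap B_3)\cap(mF\cap mB_3)=rF\cap mrF=r(F\cap mF),
\]
the last equality being a direct unpacking of the definitions. Setting $X:=F\cap mF$, this reads $X=rX$. Since $X\subset[-2,2]$ is bounded and $0<r<1$, iteration gives $X\subset[-2r^n,2r^n]$ for every $n$, hence $X=\{0\}$, which is what we wanted.

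The main obstacle is purely combinatorial: the simultaneous satisfaction of the lower and upper bounds that define $I$ is a quadratic inequality in $r$, and this is the only extra restriction that needs to be imposed on $\varepsilon$ beyond those already in the construction. The disjointness of the five bands and the central self-similarity $F\cap B_3=rF$ are routine consequences of the self-similar structure of $A$, and the fixed-point argument for $X$ then closes the proof.
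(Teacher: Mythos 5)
Your proof is correct, and it reaches the conclusion by a route that differs in a worthwhile way from the paper's. The paper works two-dimensionally: it peels $F\times F$ into annular pieces $\widetilde{F}_i$ around the origin, proves the key lemma $\widetilde{F}_{i+1}=\tfrac{1}{8-\varepsilon}\widetilde{F}_i$ so that the cone over $\bigcup_i\widetilde{F}_i$ collapses to the cone over the single outermost piece $\widetilde{F}_1$, and then checks (essentially by inspecting Figure 1) that an explicit line misses that cone away from the origin. You instead reduce the whole problem to the one-dimensional statement $mF\cap F=\{0\}$, and prove it via the band decomposition of $F\subset[-2,2]$, the central self-similarity $F\cap B_3=rF$, and the fixed-point identity $X=rX$ for $X=F\cap mF$, which forces $X=\{0\}$ by iteration. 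Both arguments hinge on the same ratio-$r$ self-similarity near the origin, but yours replaces the paper's geometric "cone over the first annulus" step and its concluding "easy computation" with an explicit algebraic contraction argument, and it yields the full open interval $\bigl(2/(1-2r),\,(1-2r)/(2r)\bigr)$ of admissible slopes (consistent with the paper's choice $m=(2+\varepsilon)/(1-\tfrac{2}{8-\varepsilon})$, which sits just above your lower endpoint). The only point worth flagging is that your decomposition $F=(A-A)\cup(A+A)\cup(-(A+A))$ uses the identity $F=\mathrm{diff}(E)$, which the paper asserts but does not prove; it does hold (by compactness and nestedness of the $A_i$), and in any case your bands and the identity $F\cap B_3=rF$ can be verified directly for $F=\bigcap_i F_i$, so this is a presentational rather than a substantive gap.
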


To prove it, denote by $\widetilde{F}_1$ the set $(F_1\times F_1)\setminus G_1$, where $G_1$ is the connected component of $(F_1\times F_1)$ containing the origin. For every $i=2,3,\ldots$,
denote by $\widetilde{F}_i$ the set $((F_i\times F_i)\setminus \widetilde{F}_{i-1})\setminus G_i$, where $G_i$ is the connected component of $(F_i\times F_i)$ containing the origin. Note that for $i=2,3,\ldots$ we have $\widetilde{F}_i\subset G_{i-1}$. Clearly the cone over
$(F\times F)$ is contained in the cone $\widetilde{C}$ over the set $\bigcup_i \widetilde{F}_i$, which coincides with the union of the cones $\widetilde{C}_i$ over $\widetilde{F}_i$.
The key observation is the following lemma.

\begin{figure}[htbp]
\center
\input{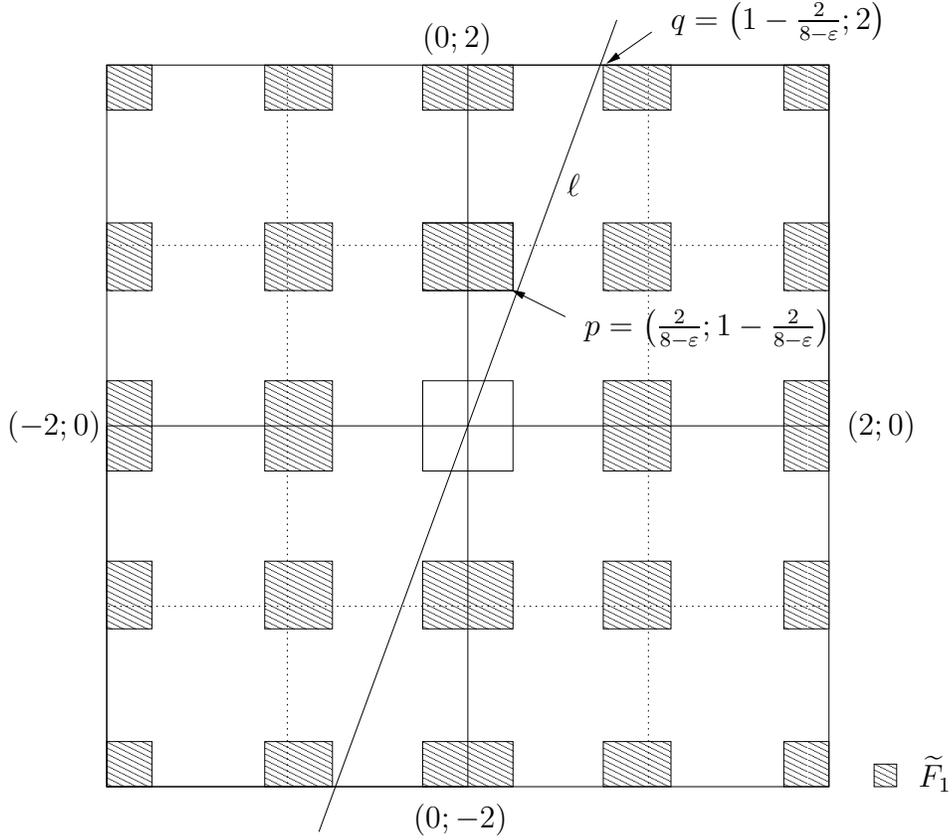}
\caption{A line $\ell$ intersects the cone over $\widetilde{F}_1$ only at the origin. }
\label{f1}
\end{figure}

\begin{lemma}
For every $i=1,2,\ldots$, $\widetilde{F}_{i+1}$ is an homothetic copy of $\widetilde{F}_{i}$. More precisely, $\widetilde{F}_{i+1}=\frac{1}{8-\varepsilon}\widetilde{F}_{i}.$
\end{lemma}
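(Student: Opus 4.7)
The plan is to exploit the self-similarity of the Hutchinson fractal $A$. Writing $a := 1/(8-\varepsilon)$ and noting that the two halves of each $A_i$ come from applying the similitudes $\phi_0(x) = ax$ and $\phi_1(x) = (1-a) + ax$ to $A_{i-1}$, one gets $A_{i+1} = aA_i \cup \bigl((1-a) + aA_i\bigr)$, hence
\[
B_{i+1} \;=\; L_{i+1} \;\cup\; (aB_i) \;\cup\; R_{i+1},
\]
with $R_{i+1} := (1-a) + aA_i \subset [1-a, 1]$, $L_{i+1} := -R_{i+1} \subset [-1,-(1-a)]$, and $aB_i \subset [-a, a]$. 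The three blocks are pairwise separated since $a < 1/4$ (as $\varepsilon < 4$).

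The crucial identity is $F_{i+1} \cap [-2a, 2a] = aF_i$. To prove it I would enumerate the six possible block pairings for a sum $x + y$ with $x,y \in B_{i+1}$. When $a < 1/4$ the corresponding sum-ranges (centered near $0$, $\pm 1$, $\pm 2$) are pairwise disjoint, so only two pairings can land inside $[-2a, 2a]$: center+center, contributing $aB_i + aB_i = aF_i$, and left+right, contributing $L_{i+1} + R_{i+1} = a(A_i - A_i) \subset aF_i$. Their union is exactly $aF_i$. A brief induction on $i$ then shows that the origin-component of $F_i$ is the interval $J_i := [-2a^i, 2a^i]$: the origin-component of $F_{i+1}$ must sit inside $F_{i+1} \cap [-2a, 2a] = aF_i$, whose origin-component is $aJ_i = J_{i+1}$. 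Therefore $G_i = J_i \times J_i$ and $G_{i+1} = aG_i$ for every $i \geq 1$.

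Interpreting the stated relation $\widetilde F_i \subset G_{i-1}$ as $\widetilde F_i = (F_i \times F_i) \cap (G_{i-1} \setminus G_i)$ (the annular part at scale $i$), the lemma follows by direct computation. For $(x,y) \in G_i \setminus G_{i+1}$ both coordinates lie in $J_i \subset [-2a, 2a]$, so the key identity restricts to $F_{i+1} \cap J_i = aF_i \cap J_i$, giving
\[
\widetilde F_{i+1} \;=\; (aF_i \cap J_i)^2 \cap (G_i \setminus G_{i+1}).
\]
On the other hand, pushing $\widetilde F_i$ forward by the homothety of ratio $a$ and using $aG_{i-1} = G_i$, $aG_i = G_{i+1}$ yields
\[
a\widetilde F_i \;=\; (aF_i \times aF_i) \cap (G_i \setminus G_{i+1}),
\]
which agrees with the previous display because $G_i \setminus G_{i+1} \subset J_i \times J_i$ forces the restriction to $(aF_i \cap J_i)^2$. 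Hence $\widetilde F_{i+1} = a\widetilde F_i$.

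The main technical point is the interval-arithmetic case analysis used to pin down $F_{i+1} \cap [-2a, 2a]$: one must check that the six block pairings yield sum-ranges that do not overlap, which is precisely what the assumption $\varepsilon < 4$ guarantees. Once that is in hand, the remaining steps are a routine unwinding of the definitions under the homothety of ratio $a$.
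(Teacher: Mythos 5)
Your proof is correct, and while it rests on the same underlying self-similarity as the paper's argument, the execution is genuinely different. The paper reduces to the case $i=1$ and works ``upstairs'': it lifts a point $z\in\widetilde F_2$ to a pair $x,y\in(A_2\cup-A_2)^2$, uses $|z|<\tfrac12$ (i.e.\ $z\in G_1$) to force $x,y$ into the same connected component of $(A_1\cup-A_1)^2$, translates to the central component $D$, and invokes $D\cap(A_2\cup-A_2)^2=\tfrac{1}{8-\eps}(A_1\cup-A_1)^2$ to transfer the homothety to the difference sets. You instead stay ``downstairs'' with the one-dimensional difference sets: the three-block decomposition $B_{i+1}=L_{i+1}\cup aB_i\cup R_{i+1}$ and the interval arithmetic on the six pairings give the clean identity $F_{i+1}\cap[-2a,2a]=aF_i$, from which you also extract the explicit description $G_i=[-2a^i,2a^i]^2$; the lemma then falls out of a set-theoretic computation. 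What your route buys is precision on exactly the points the paper leaves implicit: you prove both inclusions (the paper's argument literally only shows that points of $\widetilde F_2$ land in $a(F_1\times F_1)$), you identify the origin components rather than just bounding them, and you make explicit that the recursive definition of $\widetilde F_i$ must be read as the annular set $(F_i\times F_i)\cap(G_{i-1}\setminus G_i)$ --- a reading that is indeed forced, since the literal recursion would reintroduce the far-from-origin part of $F_i\times F_i$ at every second step and would be inconsistent with the paper's own assertion $\widetilde F_i\subset G_{i-1}$. The only cosmetic quibble is that $a<1/4$ is what separates the six sum-ranges; mere disjointness of the three blocks only needs $a<1/2$.
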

\begin{proof}
It is sufficient to prove the lemma in case $i=1$. Every point $z$ in $\widetilde{F}_{2}$ is a difference of two points $x$ and $y$ in $(A_{2}\cup-A_{2})^2$. Moreover 
$x$ and $y$ necessarily belong to the same connected component of $(A_{1}\cup-A_{1})^2$, because for $a$ and $b$ in different components of $(A_{1}\cup-A_{1})$, there holds $|a-b|\geq\frac{1}{2}$, while $|z|<\frac{1}{2}$. So we can assume that $x$ and $y$ belong to the connected component $D$ of $(A_{1}\cup-A_{1})^2$ which contains 0, because any other connected component is contained in a translated copy of $D$. Since the intersection
of $D$ with $(A_{2}\cup-A_{2})^2$ is an homothetic copy of  $(A_{1}\cup-A_{1})^2$ of ratio $\frac{1}{8-\varepsilon}$, then the same is true for their difference sets, and hence for $\widetilde{F}_2$ and $\widetilde{F}_1$.
\end{proof}

In particular, it follows that $\widetilde{C}=\widetilde{C}_1$. An easy computation shows that if $\varepsilon$
is sufficiently small, then a line $\ell$ with the claimed property exists (see Figure \ref{f1}). For example the line $$y=\frac{2+\varepsilon}{1-\frac{2}{8-\varepsilon}}x$$
has the claimed property, for $\varepsilon$ sufficiently small.

\section{The Second Conjecture}\label{sc}
\subsection {Strategy of the proof}
We consider the set $A$ constructed in the previous section and we set $S:=A\times A\times A$. We already proved that $\dim_{\mathcal{H}}(S)>1$. Then we prove that for a certain open set of directions in the plane $z=0$, any line with such direction intersect $A\times A$ in at most one point. This is sufficient to conclude that
every blow-up of $A\times A$ at any point does not contain points in such directions. Hence every blow-up of $A\times A$ is contained in a closed cone $\widetilde{C}_{xy}$ which does not contain points in such directions. Therefore every blow-up of
$S$ is contained in the closed cone $C_{xy}:=\widetilde{C}_{xy}\times\R$. The same argument applied to the coordinate planes $x=0$ and $y=0$, gives that every blow-up of
$S$ is contained in the intersection $C:=C_{xy}\cap C_{xz}\cap C_{yz}$, which does not contain planes, because its projections on the coordinate planes never coincide with the full coordinate plane itself.

\subsection {Property of $A\times A$}\label{prp}
Consider the set $A$ constructed in the previous section. We can assume here $\varepsilon=1$. The set $A\times A$ in the plane $z=0$ has the following property. For all $q\in\R$, every line $y=mx+q$ such that $\frac{7}{5}<m<5$ intersects the set $A\times A$ in at most one point. In fact it is easy to see that $A_1\times A_1$ has the property that such a line intersects at most one of its four connected components (see Figure \ref{f2}). Since the part of $A_2\times A_2$ which is contained in one connected component of $A_1\times A_1$ is homothetic to $A_1\times A_1$ itself, the same property holds for $A_2\times A_2$ and, by induction, for $A_i\times A_i$, for every $i$.\\

\begin{figure}[htbp]
\center
\input{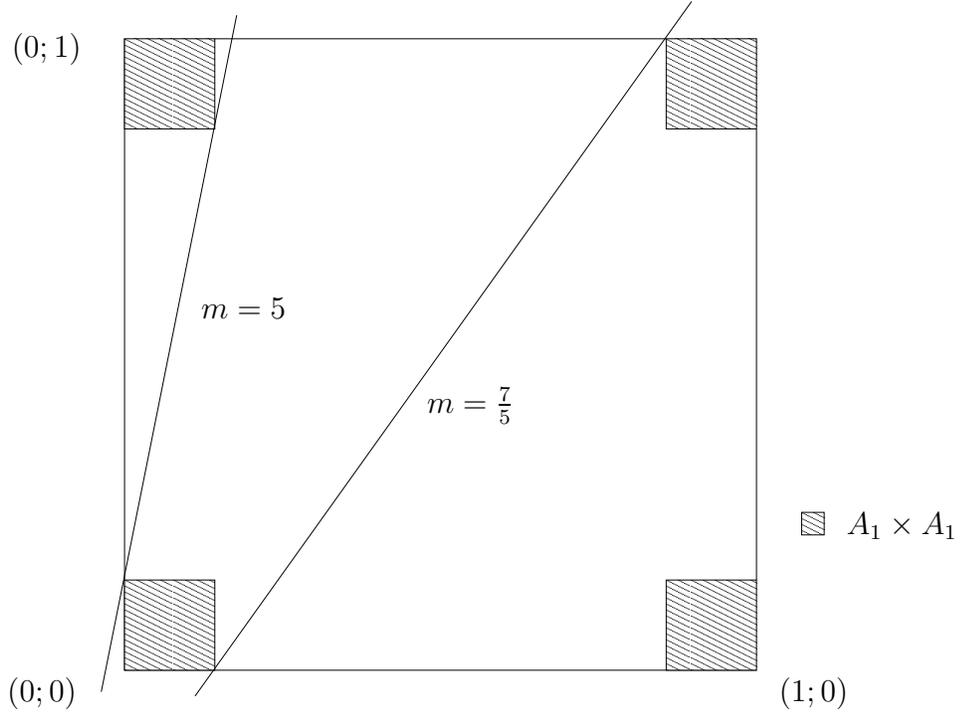}
\caption{Every line $y=mx+q$ with $\frac{7}{5}<m<5$ intersects the set $A_1\times A_1$ in at most one point. }
\label{f2}
\end{figure}

\subsection{Blow-up of a set}\label{bu}
Given a closed set $S$, a point $x\in S$ and sequence $r_j\searrow 0$, there exists a subsequence $r_{j_k}$ such that the sequence of sets $$S_{x,r_{j_k}}:=\overline{B}(0,1)\cap \frac{S-x}{r_{j_k}}$$ converges in the Hausdorff distance to some closed set $S_x$. In this case we say that $S_x$ is a blow-up of the set $S$ at the point $x$. Of course, choosing a different sequence $r_j$ or a different subsequence $r_{j_k}$ can lead to a different blow-up. 
\subsection{Blow-ups of $A\times A$}
Now we want to prove that at every point of $A\times A$, every blow-up of $A\times A$ intersects a line $y=mx$ with $\frac{7}{5}<m<5$ only at the origin. Assume the contrary that there exist $\frac{7}{5}<m_0<5$, a point $z_0=(x_0,y_0)\in A\times A$, a point $p=(t,m_0t)\neq (0,0)$ and a sequence $r_n\searrow 0$ such that the point $p$ belongs to the limit of the sequence $(A\times A)_{z_0,r_n}$. This means that every arbitrarily small neighborhood of $p$ intersects the set $(A\times A)_{z_0,r_n}$, for every $n$ sufficiently large. Take a neighborhood of $p$ which does not contain 0 and it is sufficiently small to be contained in the cone made by all lines $y=mx$ with $\frac{7}{5}<m<5$. Since this neighborhood intersects the set $(A\times A)_{z_0,r_n}$, then for some $\frac{7}{5}<m'<5$, the line $y=m'x$ contains a point of $(A\times A)_{z_0,r_n}$ which is not 0. Or, in other words, the line $y-y_0=m'(x-x_0)$ contains a point of $(A\times A)$ which is not $z_0$. Therefore that line intersects the set $(A\times A)$ in at least two point, which is a contradiction to \S \ref{prp}.\\

We can conclude that at every point of $A\times A$, every blow-up of $A\times A$ is contained in the closed cone
$$\widetilde{C}_{xy}:=\left\{(x,y)\in\R^2:y\neq mx, {\rm{for\; every\;}} \frac{7}{5}<m<5\right\}\cup 0.$$

\subsection{Blow-ups of $S$}
Given a projection $\pi$, the projection of a blow-up of a set $S$ at a point $z$ (i.e. the limit of $S_{z,r_n}$, for some sequence $r_n$) is contained in the limit of $\pi(S)_{\pi(z),r_n}$: a blow-up of the projection $\pi(S)$ at the point $\pi(z)$. In particular every blow-up of the set $S:=A\times A\times A$ at any point is contained in the cone
$$C_{xy}:=\{(x,y,z)\in\R^3:(x,y)\in\widetilde{C}_{xy}\}.$$
The same holds for the cones $C_{xz}$ and $C_{yz}$ defined analogously. Therefore every blow-up of the set $S$ at any point is contained in the cone $$C:=C_{xy}\cap C_{xz}\cap C_{yz}.$$ As in the previous section, this cone cannot contain any plane, because, if so, at least one of the projections of the cone on the coordinate planes should be full, which is not the case. We deduce that the set $S$ has the following property. Every blow-up of $S$ at every point of $S$ is contained in a closed cone (which is in particular a trivial compact family of closed cones) which does not contain planes. Nevertheless $\dim_{\mathcal{H}}(S)>1$. Therefore $S$ disproves the second conjecture.

This argument, together with Theorem \ref{t1}, gives indirectly a counterexample to the First Conjecture. In fact the cone $C$ must have building dimension strictly larger than 1, because it contains every blow-up at every point of the set $S$ (which has dimension strictly larger than 1).

%
%

\bibliographystyle{plain}

\begin{thebibliography}{99}
\setlength{\itemsep}{3pt}
\newcommand{\aut}[1]{\textsc{#1}}


\bibitem{Al}
\aut{Almgren, Frederick J., Jr.}
\newblock \textit{Almgren's big regularity paper.}
\newblock World Scientific Monograph Series in Mathematics , 1.
World Scientific Publishing Co., Inc., River Edge, NJ, 2000.


\bibitem{ES}
\aut{Erd{\H o}s, Paul; Stone, Artur H.}
\newblock On the sum of two Borel sets.
\newblock \textit{Proc. Amer. Math. Soc.}, 25 (1970), 304--306.


\bibitem{Fa}
\aut{Falconer, Kenneth J.}
\newblock \textit{The geometry of fractal sets.}
\newblock Cambridge Tracts in Mathematics, 85.
Cambridge University Press, Cambridge 1985.


\bibitem{W}
\aut{White, Brian.}
\newblock Stratification of minimal surfaces, mean curvature flows, and harmonic maps.
\newblock \textit{J. Reine Angew. Math.}, 488 (1997), 1--35.
\end{thebibliography}

%
%

\vskip .5 cm

{\parindent = 0 pt\begin{footnotesize}

A.M.
\\
Max-Planck-Institut f\"ur Mathematik
in den Naturwissenschaften
\\
Inselstrasse~22,
04103 Leipzig,
Germany
\\
e-mail: {\tt marchese@mis.mpg.de}

\end{footnotesize}
}

\end{document}